\newtheorem{thm}{Theorem}[section]
\newtheorem{lem}[thm]{Lemma}
\newtheorem{cor}[thm]{Corollary}
\theoremstyle{remark}
\newtheorem{rmk}[thm]{Remark}
\theoremstyle{definition}
\newtheorem{defn}[thm]{Definition}
\newcommand{\ds}{\displaystyle}
\newcommand{\x}{\times}
\let\temp\phi
\let\phi\varphi
\let\varphi\temp
\newcommand{\urlb}[1]{\textcolor{blue}{\url{#1}}}
\newcommand{\strutdepth}{\dp\strutbox}
\newcommand{\marginalnote}[1]
   {\strut\vadjust{\kern-\strutdepth\domarginalnote{#1}}}
\newcommand{\domarginalnote}[1]{\vtop to \strutdepth{
  \baselineskip\strutdepth
   \vss\llap{ #1\ \ }\null}}  
\newcounter{showlabelflag}
\newcounter{makelabelflag}
\newcommand{\makelabels}{\setcounter{makelabelflag}{1}}
\newcommand{\hidelabels}{\setcounter{showlabelflag}{2}}
\newcommand{\mylabel}[1]{
  \ifthenelse{\value{makelabelflag}=1}
    {\label{#1}}{}
  \ifthenelse{\value{showlabelflag}=1}
    {\marginpar{#1}}{}\relax}
\title{\vspace*{0.5in}\bf Locally solvable subgroups of PLo(I) are countable}
\author{Amanda Taylor \\ Department of Mathematics and Computer Science \\ Alfred University \\ 1 Saxon Drive \\ Alfred, NY 14802 \\ tayloral@alfred.edu }
\date{}
\begin{document}

\pagenumbering{roman}
\maketitle \thispagestyle{empty}

\centerline{\bf Abstract}
We show every locally solvable subgroup of PLo(I) is countable.  A corollary is that an uncountable wreath product of copies of $\mathbb{Z}$ with itself does not embed into PLo(I).

\section{Introduction}\mylabel{intro}
\pagenumbering{arabic} 

{\bf PLo(I)} is the group of piecewise linear orientation-preserving homeomorphisms of the unit interval with finitely many points of non-differentiability ({\bf breakpoints}). Thompson's Group F is the subgroup of PLo(I) whose elements have breakpoints occurring at dyadic rationals and slopes are integer powers of 2.  Thompson's Group F has particularly interesting properties which endear it to the field of geometric group theory.  Most notably, it is known to contain no free subgroups of rank 2 or higher \cite{nf2}, yet, despite many valiant attempts, whether the group is amenable has been an open problem since at least the late '70s.  

By contrast, the subgroup structure problem has remained poorly understood, but it is becoming better understood.  In 2005, Matt Brin studied a family of elementary amenable subgroups in \cite{ea}. Furthermore, the solvable subgroups were classified completely by Collin Bleak in \cite{geo} and \cite{alg}---papers crafted from his 2006 PhD thesis.  

The current paper is a portion of A. Taylor's PhD thesis from 2017 which focuses on a generalization of solvable subgroups called locally solvable subgroups.  A group is {\bf locally solvable} if every finitely generated subgroup is solvable.  We prove the following.

\begin{thm}\mylabel{main}
Every locally solvable subgroup of PLo(I) is countable.
\end{thm}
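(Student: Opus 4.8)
The plan is to reduce the theorem to a statement purely about breakpoints and then to exploit the classification of solvable subgroups locally. The reduction is the soft observation that $G$ is countable if and only if $S:=\bigcup_{g\in G}\mathrm{Breakpoints}(g)$ is countable. One direction is clear, since each $\mathrm{Breakpoints}(g)$ is finite. For the other, note that an element $g\in\mathrm{PLo}(I)$ is completely determined by the finite list of pairs $(x,g(x))$ with $x$ a breakpoint of $g$; moreover if $x$ is a breakpoint of $g$ then $g(x)$ is a breakpoint of $g^{-1}$, so both coordinates lie in $S$. Hence $g\mapsto\big((x,g(x))\big)_{x\in\mathrm{Breakpoints}(g)}$ injects $G$ into the countable set of finite sequences over $S\times S$, and it suffices to prove that $\mathrm{Breakpoints}(G):=S$ is countable.

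So suppose, for contradiction, that $S$ is uncountable. A Lindel\"of argument then produces a \emph{busy point} $p\in[0,1]$, meaning every neighbourhood of $p$ contains a breakpoint of uncountably many distinct elements of $G$: otherwise every point of $[0,1]$ has an open neighbourhood meeting only countably many of the sets $\mathrm{Breakpoints}(g)$, and passing to a countable subcover while recalling that every nonidentity element of $\mathrm{PLo}(I)$ has at least one breakpoint would force $G$ countable. The next step is a local analysis at $p$. The one-sided germ homomorphisms $\mathrm{Stab}_G(p)\to\mathbb{R}_{>0}$, recording the left and right slopes of $g$ at $p$, have abelian image and a kernel consisting exactly of the elements of $G$ that fix a neighbourhood of $p$ pointwise; that kernel is a directed union, over shrinking neighbourhoods of $p$, of subgroups that split as products of locally solvable groups of piecewise-linear homeomorphisms of strictly shorter flanking intervals. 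Together with a parallel analysis of the transitive action of $G$ on the orbit $Gp$, this is the bookkeeping that should feed an induction on an ordinal-valued measure of ``local complexity'' at busy points.

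The engine of the argument is an orbital dichotomy, driven by the tower machinery of Bleak \cite{geo,alg}: either the family $\mathcal{O}(G)$ of orbitals (components of supports) of elements of $G$ is countable, or it contains an uncountable ``crowded'' subfamily --- for instance uncountably many pairwise linked orbitals, or uncountable branching inside a single orbital. In the countable case one checks that countably many orbitals produce only countably many breakpoints: inside one orbital $(a,b)$ breakpoints may accumulate, say along the orbit of an element with nontrivial germ at $a$, but this accumulation is controlled by countably much germ and orbit data, so $S$ is countable after all, a contradiction. In the crowded case one must distil, from the uncountable subfamily, a \emph{finite} set of elements of $G$ whose generated subgroup contains towers of every finite height and hence is non-solvable by Bleak's classification --- contradicting the hypothesis that $G$ is locally solvable.

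The main obstacle is precisely this last distillation. Local solvability bounds tower height only inside each finitely generated subgroup, not in $G$ globally; indeed the infinite iterated wreath product $\mathbb{Z}\wr\mathbb{Z}\wr\cdots$ is locally solvable, has towers of every finite height, and is countable, so ``many orbitals'' or ``tall towers'' in $G$ are never in themselves contradictory. What has to be shown is that an uncountable amount of orbital data is qualitatively incompatible with being ``lined up'' into a single tower the way the countable examples are, so that the crowding is already witnessed among finitely many generators as a forbidden, non-solvable configuration. Identifying exactly which finite sub-configuration this is, and checking that it genuinely occurs, is the technical heart of the proof, and is where I expect the distinction between ``pure'' and ``non-pure (split)'' towers suggested by the figures to carry the weight.
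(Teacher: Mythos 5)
Your reduction of countability of $G$ to countability of the breakpoint set is fine, and your instinct that everything hinges on identifying a \emph{finite} forbidden configuration is exactly right --- but you leave precisely that step open, and it is the step on which the paper's whole argument pivots. The missing ingredient is the Bleak--Brough--Hermiller characterization (Theorem \ref{loc solv} in the paper): a subgroup of PLo(I) is locally solvable if and only if it contains no transition chain, i.e.\ no pair of bumps whose orbitals overlap with neither contained in the other. So the forbidden configuration is witnessed by just \emph{two} elements; no extraction of an uncountable ``crowded'' family is needed, and the paper simply assumes ``no transition chains'' from the outset and proves countability directly. Your busy-point, germ-homomorphism and ordinal-induction scaffolding then does no work: once any two intersecting orbitals must be nested, the paper shows $\mathcal{O}(G)$ is countable by sorting orbitals into length scales $\left(\left(2/3\right)^n,\left(2/3\right)^{n-1}\right]$, applying a Lindel\"of argument within each scale, and noting that two orbitals of comparable length meeting a common one cannot be disjoint, so they form a chain; chains of orbitals are countable because nesting without shared endpoints (Lemma \ref{end}, Lemma \ref{fd}) produces pairwise disjoint gap intervals each containing a rational (Lemmas \ref{towers count} and \ref{orbitals count}).

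There is a second genuine gap in your ``countable orbital case.'' Countably many orbitals do not by themselves bound the number of elements or breakpoints: a priori uncountably many distinct bumps could share a single orbital, and your appeal to ``countably much germ and orbit data'' is an assertion, not an argument. The paper closes this with Lemma \ref{signatures count}: for a fixed orbital $O$, the set of bumps with orbital $O$ is countable. The proof encodes a bump by its initial slope together with its bouncepoints and the slopes leaving them; a bouncepoint of a pair $f,g$ is an endpoint of an orbital of $fg^{-1}$, hence ranges over a countable set by the orbital count, while the possible initial and terminal slopes on a fixed orbital are countable by Lemma \ref{it counts}, imported from Bleak's algebraic classification --- and that input again uses the no-transition-chain hypothesis, not merely countability of $\mathcal{O}(G)$. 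Without these two ingredients --- the transition-chain characterization of local solvability and the per-orbital count of bumps --- your outline does not close, as you yourself acknowledge at the end.
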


Hall's generalized wreath product construction in Section 1.3 of \cite{hall} results in an uncountable locally solvable
group if the indexing set $\Lambda$ is uncountable and the groups $H_{\lambda}$ are all copies of $\mathbb{Z}$.  With this construction in mind, a corollary of the main theorem is

\begin{cor}\mylabel{cool}
An uncountable wreath product of copies of $\mathbb{Z}$ with itself does not embed in PLo(I).
\end{cor}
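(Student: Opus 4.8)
The plan is to obtain Corollary~\ref{cool} as an immediate consequence of Theorem~\ref{main}: since $\mathrm{PLo}(I)$ has no uncountable locally solvable subgroup, it suffices to exhibit the relevant wreath product as an uncountable locally solvable group. Fix an uncountable set $\Lambda$, equip it with a total order, set $H_\lambda=\mathbb{Z}$ for every $\lambda\in\Lambda$, and let $W$ be the restricted generalized wreath product of the $H_\lambda$ built as in Section~1.3 of \cite{hall}. Assume toward a contradiction that there is an embedding $\iota\colon W\hookrightarrow\mathrm{PLo}(I)$, and put $G=\iota(W)\le\mathrm{PLo}(I)$. Being locally solvable is inherited by subgroups and preserved under isomorphism, so $G$ is locally solvable; and $|G|=|W|$. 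Thus, once $W$ is shown to be uncountable and locally solvable, $G$ is an uncountable locally solvable subgroup of $\mathrm{PLo}(I)$, contradicting Theorem~\ref{main}, and the corollary follows.

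Checking the two properties of $W$ is then the whole job. For local solvability one uses the defining feature of Hall's construction, that every element of $W$ is supported on only finitely many coordinates of $\Lambda$. Given finitely many elements $g_1,\dots,g_k\in W$, let $F\subseteq\Lambda$ be the union of their supports, a finite set; then $\langle g_1,\dots,g_k\rangle$ lies inside the sub-wreath-product of $W$ indexed by $F$, which is a finite iterated wreath product of copies of $\mathbb{Z}$ ($|F|$ factors). A restricted wreath product of two abelian groups is metabelian, and derived length is subadditive under group extensions, so any such finite iterated wreath product is solvable; hence $\langle g_1,\dots,g_k\rangle$ is solvable. Since $g_1,\dots,g_k$ were arbitrary, $W$ is locally solvable. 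Uncountability is even cheaper: $W$ contains the $|\Lambda|$ pairwise distinct canonical generators of the factors $H_\lambda$, so $|W|\ge|\Lambda|>\aleph_0$.

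I do not anticipate a genuine obstacle here; all the depth of Corollary~\ref{cool} is carried by Theorem~\ref{main}, and what remains is bookkeeping. The one point that deserves care is matching conventions with \cite{hall}: one should work with the restricted form of the generalized wreath product, so that ``support'' is well defined and finite for each element — this is exactly what forces a finitely generated subgroup into a finite iterated wreath product, and hence into a solvable group — while confirming that this same restricted form is still large enough to be uncountable when $\Lambda$ is uncountable. Once those conventions are pinned down, the argument above is complete.
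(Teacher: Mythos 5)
Your proposal is correct and follows essentially the same route as the paper: the paper likewise observes that every finitely generated subgroup of Hall's ordered wreath product of copies of $\mathbb{Z}$ sits inside a finitely iterated (hence solvable) wreath product of copies of $\mathbb{Z}$, concludes local solvability, and then invokes Theorem \ref{main} to rule out an uncountable embedded copy. Your write-up just makes the finite-support bookkeeping and the uncountability check more explicit than the paper does.
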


Results in $\cite{loc solv}$ permit us to use geometry to work with locally solvable subgroups.   In particular, a notion called a transition chain is of central importance to this paper.  In \cite{geo}, Bleak shows transition chains are obstructions to building solvable subgroups.  The geometric nature of PLo(I) helps Bleak classify the solvable subgroups completely.  The foundations he builds also assist Bleak in providing an algebraic classification for the solvable subgroups in \cite{alg}.  The following conventions and terminology assist us in defining a transition chain.

 Our group actions are right actions, so we use the notation $(x)fg$ to mean $((x)f)g$.  Also, for conjugation and commutators, $f^g := g^{-1}fg$ and $[f,g] := f^{-1}g^{-1}fg$.  Given a function $f \in PLo(I)$, $Supp(f) := \{x \in I \, | \, (x)f \neq x \}$ which we call the {\bf support} of $f$.  Similarly, given a subgroup $G$ in PLo(I), the {\bf support} of $G$ is $Supp(G) := \{x \in I \, | \, (x)f \neq x \text{ for some } f \in G\}$.  An {\bf orbital} of a function $f$ in $PLo(I)$ is a maximal interval contained in $Supp(f)$. Orbitals are open intervals in $\mathbb{R}$, and every function in PLo(I) has only finitely many orbitals because they have only finitely many breakpoints.  A {\bf bump} of $f$ is the restriction of $f$ to a single orbital.  A function is called a {\bf one-bump function} if it has only one orbital. 

A {\bf signed orbital} is a pair $(A,f)$ where $f \in PLo(I)$ and $A$ is an orbital of $f$.   We call $A$ the {\bf orbital} of $(A,f)$ and $f$ the {\bf signature}.  Signed orbitals are symbols used to represent bumps of functions, and they are clearly in one-to-one correspondence with bumps of functions.  Thus we move freely between the two concepts.  We utilize terminology for sets related to signed orbitals, so we define those now.

For a set $C$ of elements of $PLo(I)$, let $\mathcal{SO}(C)$ be the set of all signed orbitals of $C$ and $\mathcal{O}(C)$ the set of all the orbitals of elements in 
$C$.  For a set with a single element $g \in$ PLo(I), we will omit set brackets in the previous notations and write only $\mathcal{SO}(g)$ or $\mathcal{O}(g)$. If $C$ is a collection of signed orbitals of PLo(I), we also use the notations $\mathcal{O}(C)$ and $\mathcal{S}(C)$ to denote the set of all orbitals of members of $C$ and the set of all signatures of members of $C$, respectively.

\begin{defn}
 A {\bf transition chain} is a pair of signed orbitals $(A,f)$, $(B,g)$ such that $A \cap B \neq \emptyset$, $A \not\subset B$, and $B \not\subset A$.  We say a subset $C$ of signed orbitals of PLo(I) has {\bf no transition chains} if no pair of elements of $\mathcal{SO}(C)$ is a transition chain.  
\end{defn}

Visually, a transition chain is a pair of bumps which overlap, but neither of the corresponding orbitals properly contains the other.  We may also say that two functions $f$ and $g$ form a transition chain if the set $\mathcal{SO}(\{f,g\})$ contains a pair forming a transition chain.  See figure \ref{tchain} for an example of functions who graphs form a transition chain. 

   \begin{figure}
   \begin{center}
     \includegraphics[width=0.45\linewidth]{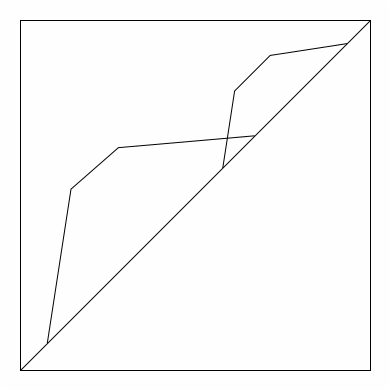}
     \caption{Graphs of elements with signed orbitals that form a transition chain}\mylabel{tchain}
        \end{center}
   \end{figure}
   
 When transition chains are not allowed, group actions on the interval are much simpler.  Furthermore, thanks to Theorem 3.2 in \cite{loc solv}, we can use lack of transition chains to work geometrically with locally solvable subgroups of PLo(I):


\begin{thm}\mylabel{loc solv}
A subgroup of $PLo(I)$ is locally solvable if and only if it has no transition chains.
\end{thm}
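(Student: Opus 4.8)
The statement is an equivalence, so I would prove the two implications separately. Both revolve around two remarks: (i) ``no transition chains'' says precisely that the orbital collection $\mathcal{O}(G)$ is \emph{laminar}, i.e.\ any two orbitals are either nested or disjoint; and (ii) a transition chain, being a configuration of two overlapping bumps, is already witnessed inside a $2$-generated subgroup. Call a finite sequence $(A_1,f_1),\dots,(A_k,f_k)$ of signed orbitals a \emph{tower of height $k$} if $A_1\subsetneq A_2\subsetneq\cdots\subsetneq A_k$. I will also use the fact (essentially Bleak's observation in \cite{geo} that transition chains obstruct solvability) that a solvable subgroup of $PLo(I)$ has the heights of its towers bounded in terms of its derived length --- so a subgroup containing towers of arbitrarily large height cannot be solvable.

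For ``locally solvable $\Rightarrow$ no transition chains'' I would prove the contrapositive: a transition chain $(A,f),(B,g)$ with $f,g\in G$ forces the finitely generated subgroup $H=\langle f,g\rangle$ to be non-solvable, so $G$ is not locally solvable. After relabelling and possibly passing to inverses, normalise the chain to $A=(a_1,b_1)$, $B=(a_2,b_2)$ with $a_1<a_2<b_1<b_2$, and with $g$ moving points of $B$ toward $b_2$. The idea is that $g$ fixes $a_2$ but moves $b_1\in B$, so the conjugates $f^{g^{k}}\in H$ have orbitals $(A)g^{k}$ whose right endpoints $(b_1)g^{k}$ march strictly toward $b_2$; checking that $g$ displaces the \emph{left} endpoint of $A$ compatibly (this needs a small argument, since $a_1$ may itself lie in the support of $g$) one gets an infinite strictly nested family of orbitals of elements of $H$ --- a tower of every finite height inside the finitely generated group $H$. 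By the bound recalled above, $H$ is not solvable.

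For ``no transition chains $\Rightarrow$ locally solvable'', let $H=\langle g_1,\dots,g_n\rangle\le G$; it inherits the no-transition-chains property and the goal is to show it is solvable. First I would reduce to connected support: since $Supp(H)=\bigcup_i Supp(g_i)$, every component of $Supp(H)$ is a union of orbitals from the finite laminar family $\mathcal{O}(\{g_1,\dots,g_n\})$ and, being connected, has a unique maximal such orbital, which must be the whole component. As $H$ permutes the finitely many components, I pass to the finite-index normal subgroup of $H$ fixing each one --- its solvability implies that of $H$ --- and restrict to a single component, reducing to the case $Supp(H)=(a,b)$. Now every element of $H$ fixes $a$ and $b$ and is linear near each, so right-slope at $a$ and left-slope at $b$ give homomorphisms $H\to\mathbb{R}_{>0}$ with abelian image; their joint kernel $H_1$ consists of the elements that are the identity near $a$ and near $b$, so $H/H_1$ is abelian, i.e.\ $[H,H]\le H_1$. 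In the laminar picture every orbital of every element of $H_1$ is \emph{properly} inside $(a,b)$, so iterating peels off successive abelian quotients while descending the nesting hierarchy of $\mathcal{O}(H)$.

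The step I expect to be the main obstacle is showing this descent terminates, equivalently that $\mathcal{O}(H)$ has finite depth (bounded tower height). This genuinely fails for general transition-chain-free subgroups: the infinitely iterated wreath product $\mathbb{Z}\wr\mathbb{Z}\wr\cdots$ of \cite{hall} can be realised in $PLo(I)$ with all supports nested or disjoint, hence without transition chains, yet is not solvable. So finite generation must enter essentially, via an argument that the group operation applied to $g_1,\dots,g_n$ cannot create unboundedly deep nesting: starting from the laminar orbital forest of the generators, of some finite depth, one tracks how the orbitals of an arbitrary word in the $g_i$ sit inside that forest and concludes that $\mathcal{SO}(H)$ has a finite maximal tower height; this, together with the germ-peeling above (or directly with Bleak's bound relating tower height to derived length), bounds the derived length of $H$. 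Making the ``no new deep nesting'' step precise, and handling the germ kernels $H_1$ --- which need not themselves be finitely generated --- is the technical heart, and is where the geometry of \cite{geo} and \cite{loc solv} does the work.
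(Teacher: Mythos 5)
First, a point of calibration: the paper does not prove this statement at all --- it is quoted as Theorem 3.2 of Bleak--Brough--Hermiller \cite{loc solv}, and the rest of the paper uses the equivalence as a black box. So there is no in-paper argument to compare yours against; the relevant comparison is with \cite{loc solv} itself (together with \cite{geo} and \cite{alg}).

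Measured against that, your proposal is a sensible plan but not a proof, and the gap sits exactly where the real content of the cited theorem lies. The forward direction (a transition chain forces $\langle f,g\rangle$ to be non-solvable) is essentially Bleak's result in \cite{geo}, and your conjugation picture is the right mechanism; note, though, that when the left end $a_1$ of $A$ lies in $\mathrm{Supp}(g)$ the images $Ag^k$ need not be nested (they can even form transition chains with one another), so one must first produce an element whose orbital has an end strictly inside $B$ --- via commutators or suitable conjugates --- before building the tower; this is fixable but more than the ``small argument'' you allow for. The converse is where the proposal stops being a proof: after reducing to a finitely generated $H$ with a single orbital and peeling off the abelian germ quotient at the two ends, you need the descent to terminate, i.e.\ that $\mathcal{O}(H)$ has bounded tower height for finitely generated transition-chain-free $H$. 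You correctly observe that this is false without finite generation (the infinitely iterated wreath product of \cite{hall}, realized as in \cite{ea}, has no transition chains and is not solvable), but bounding the depth of a finitely generated transition-chain-free group is, via Bleak's depth-equals-derived-length theorem, precisely the theorem being cited --- so as written the plan is circular at its key step; in addition, the germ kernels $H_1$ you iterate on need not be finitely generated, so the reduction cannot simply be repeated verbatim. Until that ``no unboundedly deep nesting'' step is actually carried out --- the technical heart of \cite{loc solv}, not a routine verification --- the proposal is an outline with a genuine gap in the direction ``no transition chains $\Rightarrow$ locally solvable''.
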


 Henceforth, we work with the geometric notion of a group without transition chains instead of the algebraic notion of locally solvable subgroups.  The structure of the rest of the paper is 1. Several more definitions and elementary facts, 2. Prove that $\mathcal{SO}(G)$ is countable if $G$ is a group which has no transition chains, and 3. Consider some consequences of the proof of the main theorem.

\subsection{Additional Definitions and Facts}

One definition which plays an important role is a fundamental domain.  A {\bf fundamental domain} of a function $f$ is a half open interval $[x,xf)$ where $x$ is in the support of $f$.  A {\bf fundamental domain} of a signed orbital $(A,f)$ is a half open interval $[x,xf)$ where $x$ is a point in the orbital $A$. A set $S$ of signed orbitals is {\bf fundamental} if whenever $(A, f), (B, g) \in S$ are such that $A \subset B$, then $A$ is in a fundamental domain of $(B, g)$.  

We also require terminology related to partially ordered sets.  Any collection of orbitals is partially ordered by inclusion. Thus $\mathcal{O}(G)$ is a poset, and we will use some poset notation throughout this paper.  Let $(P, \leq)$ be a partially ordered set and let $A \in P$.  We write $\downarrow A = \{B \, | \, B < A \}$ and refer to this as the {\bf downset of $A$ in $P$}. Similarly, the {\bf upset of $A$ in $P$} is $\uparrow A = \{B | A < B \}$.  Note these sets do not contain $A$.  Also, there is no reference to $P$ in the notation $\downarrow A$ or $\uparrow A$, so the poset will be specified or clear from the context. 

The set $\mathcal{SO}(G)$ inherits an ordering from $\mathcal{O}(G)$ by making new chains for distinct elements with the same orbitals.  More explicitly, order $\mathcal{S}(G)$ by the trivial partial order on $G$, that is, if $a \neq b$ in $G$, $a$ and $b$ are incomparable.  Let $P_2$ be this partial order and $P_1$ be the partial order on $\mathcal{O}(G)$. Define the {\bf lexicographical partial order} on a product $A \times B$ of partially ordered sets by  $(a,b) \leq (c,d)$   if and only if $a < c$ or $(a=c \text{ and } b \leq d)$. Extend $P_1$ to a partial order on $\mathcal{SO}(G)$ by taking the product $P = P_1 \times P_2$ with the lexicographical partial ordering.  We use this partial order on $\mathcal{SO}(G)$ throughout this paper.

We call a chain in the poset $\mathcal{O}(G)$ a {\bf stack} in $G$.  A subset of a stack is also a stack, and hence is called a {\bf substack}.  A {\bf tower} is a chain of orbitals together with an assignment of a signature to each orbital. Hence a tower is set of signed orbitals which is naturally order isomorphic to its underlying stack.  A tower can also be described as a chain in $\mathcal{SO}(G)$ when equipped with partial order $P$ in the last paragraph.  Every subset of a tower is also a tower, so is called a {\bf subtower}.  We often work with totally ordered sets and refer to them simply as {\bf ordered sets}. Whenever a different type of ordering is used, it will be specified or clear from the context.  

Geometrically, one can think of a tower as bumps with nested orbitals.  However, it is possible that signatures of a tower have multiple bumps which create more complicated dynamics.  Towers were introduced in Collin Bleak's Ph.D. thesis to help classify solvable subgroups of PLo(I).

Definitions related to endpoints of orbitals will also assist us. If $A = (x,y)$ is an orbital of a function $f \in$ PLo(I), we call $x$ and $y$ the {\bf ends} of $A$.  We say something happens {\bf near an end} or {\bf near the ends} of $A$ if it true on some interval $(x,a) \subseteq A$ or true on 2 intervals $(x,a), (b,y) \subseteq A$, respectively.  Given two element orbitals $A$ and $B$, we say $A$ {\bf shares an end} with $B$ or $A$ and $B$ {\bf share an end} if $A \subset B$ or $B \subset A$ and at least one of their endpoints is the same.  If $A$ is an orbital of $f$ and $x \in A$, continuity implies that if $xf > x$ for some $x \in A$ then $xf > x$ for all $x \in A$.  In this case, we say $f$ {\bf moves points right on $A$}, and similarly for left. 

 {\bf Affine components of f} are the components of $[0,1]-B_f$ where $B_f$ is the set of breakpoints of $f$.  Affine components are naturally ordered from left to right.  The slopes of the first and last affine components of $f$ are called the {\bf initial and terminal slopes} of $f$.  Sometimes we consider a relative version of these definitions on a particular orbital $A$ of $f$, in which case we append {\bf on $A$} to any of the previous descriptions. 

The facts in the rest of this section are elementary and are left to the reader.

\begin{rmk} \mylabel{containment}
If $G$ has no transition chains, and $A,B \in \mathcal{O}(G)$ are such that $A \cap B \neq \emptyset$, then
$A \subset B$, $B \subset A$, or $A=B$.
\end{rmk}

\begin{rmk} Let $T$ be a tower in a subgroup $G$ of PLo(I).
 \begin{enumerate}[label={(\arabic*)},ref={\thecor~(\arabic*)}]
 \item Each subset of $T$ is a tower.
 \item $\mathcal{O}(T)$ and each subset of it is a stack.
 \item $\uparrow \mathcal{O}(T)$ in $\mathcal{O}(G)$ is a stack.
 \item $\downarrow \mathcal{O}(T)$ in $\mathcal{O}(G)$ may not be a stack.
 \end{enumerate}
 \end{rmk}

\begin{lem} 
 \begin{enumerate}[label={(\arabic*)},ref={\thecor~(\arabic*)}] 
 \item If $T$ is a tower of elements in PLo(I), then its underlying stack $\mathcal{O}(T)$ is order isomorphic to $T$.
\item  \mylabel{basic conj orbitals} If $g, c \in$ PLo(I) and the orbitals of $g$ are $\{A_i \, | \, 1 \leq i \leq n \}$, then the orbitals of $g^c$ are $\{A_i c \, | \, 1 \leq i \leq n\}$, and the map $\phi: \mathcal{O}(g) \longrightarrow \mathcal{O}(g^c)$ which takes $A_i$ to $A_i c$ is a bijection.
\item \mylabel{basic conj towers} If $T = \{(A_i,a_i) \, | \, i \in I \}$ is a tower in PLo(I) and $c \in$ PLo(I), then the set $T^c \colonequals \{(A_ic,a_i^c) | i \in I \}$ is a tower and the map $\phi_c : T \longrightarrow T^c$ defined by $(O,g) \longmapsto (Oc, g^c)$ is an isomorphism of ordered sets. A similar result holds for stacks.
\item \mylabel{conj slopes} If $g,c \in PLo(I)$, then $g^c$ has the same initial and terminal slopes on each of its orbitals as $g$ has on its corresponding orbitals.  
\item \mylabel{moveover}  Let $G \leq PLo(I)$, $O$ be an orbital of $G$, and $x,y \in O$ with $x < y$.  Then $\exists g \in G$ such that $xg > y$.
\end{enumerate} 
\end{lem}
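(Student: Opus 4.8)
The first four parts are bookkeeping that I would dispatch quickly. For part (1): a tower is by construction equipped with the order‑preserving surjection $(O,g)\mapsto O$ onto its underlying stack, so the only thing to check is injectivity, i.e. that distinct signed orbitals of a tower have distinct orbitals; but two elements of a $P$‑chain with equal first coordinate are lexicographically comparable only if they are $P_2$‑comparable, and $P_2$ is the trivial order, forcing equality. For part (2), I would compute $Supp(g^c)=Supp(g)\,c$ from $x\in Supp(c^{-1}gc)\iff xc^{-1}\in Supp(g)$, and then observe that the increasing homeomorphism $c$ carries connected components to connected components order‑preservingly, so it restricts to the claimed bijection $\mathcal O(g)\to\mathcal O(g^c)$. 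Part (3) is part (2) applied orbital‑by‑orbital together with the fact that conjugation by $c$ is a group automorphism, and part (4) is the chain rule: near an end of an orbital $A$ of $g$ all of $c^{-1},g,c$ are eventually affine with one‑sided slopes $1/s,\,m,\,s$, whose product is $m$, the corresponding slope of $g$ on $A$.

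The substantive content is part (5), the "move‑over" statement, and I would prove it by a standard orbit‑supremum argument. Write $O=(a,b)$. Since $Supp(f)$ is a finite union of open orbitals for each $f$, the set $Supp(G)=\bigcup_{f\in G}Supp(f)$ is open, so its connected component $O$ is an open interval and, by maximality of $O$, neither $a$ nor $b$ lies in $Supp(G)$; hence every $f\in G$ fixes $a$ and $b$ and therefore maps $(a,b)$ to $(a,b)$. Consequently the $G$‑orbit of $x$ is contained in $(a,b)$, and $s:=\sup\{xg:g\in G\}$ satisfies $s\le b$.

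The key claim is $s=b$. Suppose instead $s<b$, so $s\in O=Supp(G)$ and there is $h\in G$ with $sh\ne s$; if $sh<s$ then applying $h^{-1}$ gives $s<sh^{-1}$, so after replacing $h$ by $h^{-1}$ we may assume $sh>s$. By continuity of $h$ at $s$, choose $\varepsilon>0$ with $zh>s$ for all $z\in(s-\varepsilon,s]$. By definition of the supremum there is $g\in G$ with $s-\varepsilon<xg\le s$, and then $x(gh)=(xg)h>s$ with $gh\in G$, contradicting the choice of $s$. Hence $s=b$, and since $y\in O$ gives $y<b=s$, there is $g\in G$ with $xg>y$, as required.

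The one place that needs care — and what I would flag as the main obstacle — is the "push past the supremum" step: the supremum $s$ of the orbit need not itself lie in the orbit, which is exactly why the continuity argument must be set up on a half‑open left neighborhood $(s-\varepsilon,s]$ of $s$ rather than applied at the single point $s$, and why one must first know $s\in O$ (i.e. $s<b$) so that $G$ genuinely moves $s$. The supporting facts — openness of $Supp(G)$, that $G$ fixes the ends of $O$ and hence confines every orbit to $O$, and the harmless swap of $h$ for $h^{-1}$ — are routine but are what make the supremum well defined and bounded by $b$ in the first place.
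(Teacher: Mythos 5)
Your proposal is correct. The paper offers no proof of this lemma at all --- it is among the facts declared ``elementary and left to the reader'' --- so there is nothing to diverge from; your arguments for (1)--(4) are the routine ones, and your orbit--supremum argument for (5) (the orbit of $x$ stays in $O=(a,b)$ because $G$ fixes the ends of the component $O$ of $Supp(G)$; if the supremum $s$ of $\{xg : g\in G\}$ were less than $b$ it would lie in $Supp(G)$ and could be pushed past by composing a near-optimal $g$ with an element moving a left half-neighborhood of $s$ beyond $s$) is the standard proof the author presumably intends, with the continuity-at-the-supremum subtlety handled correctly.
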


\begin{lem} \mylabel{multiply basics} Let $G$ be a group without transition chains and $a,b \in G$ with orbitals $O_a, O_b$, respectively.
 \begin{enumerate}[label={(\arabic*)},ref={\thecor~(\arabic*)}]
  \item \mylabel{end}  If $O_a$ and $O_b$ share an end, then $O_a = O_b$.  The contrapositive is also very useful: If $O_a \neq O_b$, then $a$ and $b$ do not share an end.
 \item \mylabel{fd} If $O_a$ is properly contained in $O_b$, then $O_a$ is in a fundamental domain of $(O_b,b)$.  Thus towers in a group without transition chains are always fundamental.
\item \mylabel{multiply} If $O_{a_1} \subset O_{a_2} \subset \cdots \subset O_{a_n}$ is a proper chain of orbitals of $a_1, a_2, \cdots, a_n$, respectively, then $O_{a_n}$ is an orbital of the products $a_1a_2 \cdots a_n$ and $a_na_{n-1}\cdots a_1$.
 \end{enumerate}
 \end{lem}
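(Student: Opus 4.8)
The plan is to prove the three parts in order, with part~(1) carrying the real weight and parts~(2), (3) then following by fairly routine arguments using part~(1), Remark~\ref{containment}, and Lemmas~\ref{basic conj orbitals} and~\ref{conj slopes}. For part~(1): since ``$O_a$ and $O_b$ share an end'' presupposes that one orbital lies inside the other, I may assume $O_a \subseteq O_b$, and I argue by contradiction, supposing the containment is proper. Let $e$ be the common endpoint of $O_a$ and $O_b$ and $e'$ the other endpoint of $O_a$; since $O_a \subsetneq O_b$ and the two cannot share \emph{both} ends, $e'$ lies in the interior of $O_b$. Now $b$ fixes $e$ and moves $e'$, so for one of the two signs $\epsilon \in \{1,-1\}$ the point $e'b^{\epsilon}$ lies strictly between $e'$ and the endpoint of $O_b$ opposite $e$; fix that $\epsilon$. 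By Lemma~\ref{basic conj orbitals} the conjugate $a^{b^{\epsilon}}$ has orbital $O_ab^{\epsilon}$, and by the choice of $\epsilon$ this orbital properly contains $O_a$, still has $e$ as an endpoint, and has $e'$ in its interior. Consider $c := a^{-1}a^{b^{\epsilon}} = [a,b^{\epsilon}] \in G$. Near $e$ the map $a^{-1}$ has some slope $s^{-1}$ on $O_a$, while by Lemma~\ref{conj slopes} $a^{b^{\epsilon}}$ has slope $s$ at $e$ on $O_ab^{\epsilon}$; since $a^{-1}$ fixes $e$, it follows that $c$ fixes all points of $O_a$ sufficiently close to $e$. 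On the other hand $(e')c = (e')a^{b^{\epsilon}} \neq e'$, because $a^{-1}$ fixes the orbital-endpoint $e'$ while $e'$ is interior to the orbital $O_ab^{\epsilon}$ of $a^{b^{\epsilon}}$. Hence the orbital of $c$ containing $e'$ is an interval with one endpoint strictly between $e$ and $e'$ and the other strictly past $e'$; together with $(O_a,a)$ it forms a transition chain, contradicting the hypothesis on $G$. Therefore $O_a = O_b$.

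For part~(2): given $O_a \subsetneq O_b$, part~(1) shows they share no end, so both endpoints of $O_a$ are interior to $O_b$; assume without loss that $b$ moves points right on $O_b$ (the other case being symmetric), and write $O_a = (p,q)$. If $pb < q$, then $a^b$ has orbital $(pb,qb)$ with $p < pb < q < qb$ (using $q \in O_b$), so $(O_a,a)$ and $(O_ab,a^b)$ would be a transition chain; hence $pb \geq q$, i.e.\ $qb^{-1}\leq p$, and then $O_a \subseteq [\,qb^{-1},q\,) = [\,qb^{-1},(qb^{-1})b\,)$, which is a fundamental domain of $(O_b,b)$. The last assertion of~(2) is then exactly the definition of a fundamental set of signed orbitals.

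For part~(3): induct on $n$, the case $n=1$ being trivial. Set $q = a_1\cdots a_{n-1}$ and $q' = a_{n-1}\cdots a_1$, which by the inductive hypothesis each have $O_{a_{n-1}}$ as an orbital. First, each $a_i$ with $i<n$ fixes both endpoints of $O_{a_n}$: if $a_i$ moved an endpoint $e$ of $O_{a_n}$, then $e$ would lie in an orbital $O'$ of $a_i$ meeting $O_{a_n}$, so by Remark~\ref{containment} $O'$ would be comparable to $O_{a_n}$, and every possibility is absurd --- either $e$ would land inside $O_{a_n}$, or the orbital $O_{a_i}\subsetneq O_{a_n}$ of $a_i$ would meet the distinct orbital $O'$ of $a_i$. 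Thus $a_1\cdots a_n$ and $a_n\cdots a_1$ fix the endpoints of $O_{a_n}$. Second, any orbital of $q$ (or of $q'$) meeting $O_{a_n}$ is properly contained in $O_{a_n}$: it cannot equal or contain $O_{a_n}$, since then it would be an orbital of $q$ intersecting the distinct orbital $O_{a_{n-1}}$ of $q$, so by Remark~\ref{containment} it lies strictly inside $O_{a_n}$ and hence, by part~(2), inside a fundamental domain $[\,r,ra_n\,)$ of $(O_{a_n},a_n)$. Third, let $z \in O_{a_n}$ and assume $a_n$ moves points right (otherwise symmetric): $(z)q$ is either $z$ or lies in the orbital $O'$ of $q$ through $z$, which is contained in a fundamental domain $[\,r,ra_n\,)$ of $(O_{a_n},a_n)$ also containing $z$; in the first case $(z)qa_n = za_n\neq z$, and in the second $(z)q \in [\,r,ra_n\,)$, so $(z)qa_n \in [\,ra_n, ra_n^2\,)$, an interval disjoint from the one containing $z$ --- so $z$ is moved by $a_1\cdots a_n$, and applying $a_n$ first gives the same for $a_n\cdots a_1$. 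Combining the three observations, $O_{a_n}$ lies in the support of each product while the endpoints of $O_{a_n}$ are fixed, so $O_{a_n}$ is a connected component of each support, i.e.\ an orbital of each.

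The step I expect to be the real obstacle is part~(1). Parts~(2) and~(3) are essentially bookkeeping with Remark~\ref{containment} and fundamental domains, but in part~(1) the offending configuration exhibits \emph{no} transition chain among the orbitals in sight: the idea is to pass to the commutator $[a,b^{\pm1}]$, whose orbital is trivial near the shared end by the slope cancellation yet survives at the far endpoint of $O_a$, and it is precisely that orbital which collides with $O_a$ to produce the forbidden chain.
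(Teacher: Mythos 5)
Your proof is correct: part (1) via the commutator $a^{-1}a^{b^{\pm1}}$ with the slope-cancellation at the shared end producing an orbital that forms a transition chain with $O_a$, and parts (2), (3) via fundamental domains and Remark \ref{containment}, all check out (including the terse symmetric step for $a_n\cdots a_1$). The paper itself offers no proof---these facts are declared ``elementary and left to the reader''---and your argument is essentially the standard one from Bleak's classification papers that the author is implicitly relying on, so there is no divergence to report.
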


\section{Countability}\mylabel{sectioncountable}

In this section, we prove the following

\begin{thm}\mylabel{countable}
A subgroup of PLo(I) without transition chains is countable.
\end{thm}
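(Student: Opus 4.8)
The plan is to show that $\mathcal{SO}(G)$ is countable when $G$ has no transition chains; since every element of $G$ has finitely many orbitals and is determined (together with its finitely many breakpoints and slopes) by data attached to its bumps, countability of $\mathcal{SO}(G)$ forces countability of $G$ itself. By Remark~\ref{containment}, any two orbitals in $\mathcal{O}(G)$ that meet are comparable or equal, so $\mathcal{O}(G)$ is a disjoint union of its maximal orbitals, and it suffices to bound the signed orbitals lying inside a single maximal orbital $M$. I would therefore fix such an $M$ and analyze the structure of $\mathcal{SO}(G)$ restricted to $M$.

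The key geometric input is Lemma~\ref{fd}: whenever $O_a \subsetneq O_b$ in $\mathcal{O}(G)$, the smaller orbital $O_a$ sits inside a single fundamental domain $[x, xb)$ of $b$. This is what prevents an uncountable ``explosion'' of nested orbitals: along any chain (stack) in $\mathcal{O}(G)$, passing to a strictly larger orbital multiplies the available room by the dynamics of the larger bump, and fundamentality pins each lower orbital into one fundamental domain, so each stack is countable (in fact one can extract an order-embedding of a stack into $\mathbb{Q}$, or argue directly that an uncountable stack would produce, via Lemma~\ref{moveover} and the finiteness of breakpoints, two members forced to share an end, contradicting Lemma~\ref{end}). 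So every stack in $\mathcal{O}(G)$ is countable. The remaining issue is that $\mathcal{O}(G)$ is only a poset, not a chain: there can be many incomparable orbitals, i.e. many parallel stacks.

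To control the antichains I would argue that at each ``level'' the orbitals are forced apart into disjoint open subintervals of $M$: if $A$ and $B$ are incomparable in $\mathcal{O}(G)$ and both are maximal among orbitals properly below some common orbital $C$, then $A \cap B = \emptyset$ by Remark~\ref{containment}, and each such orbital, having an endpoint strictly inside $C$ that is not an endpoint of $C$ (Lemma~\ref{end}), can be tagged with a rational point in its interior; distinct disjoint intervals get distinct rationals, so each level-set of the poset is countable. Combining a countable ``depth'' (every stack is countable) with countable ``width'' at each stage — formalized by building, for each signed orbital $(A,f)$, a finite-or-countable address recording the rational tags of the orbitals in a maximal stack descending from $M$ to $A$, together with a rational in $A$ itself and the finite bump-data of $f$ — gives an injection of $\mathcal{SO}(G)$ into a countable set. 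Summing over the (countably many, by the same rational-tagging of disjoint maximal orbitals) choices of $M$ yields $|\mathcal{SO}(G)| \le \aleph_0$, hence $|G| \le \aleph_0$.

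The main obstacle is the last step: a poset all of whose chains and all of whose antichains are countable need not itself be countable (e.g. an uncountable union of disjoint two-element chains), so "countable depth plus countable width" is not by itself enough. The real work is to show that the tree of maximal stacks descending from $M$ is itself countably branching at every node and has countable height, so that the set of its nodes — and with it $\mathcal{O}(G)$ restricted to $M$ — is countable; this is where one must exploit that the branching orbitals at any node are pairwise disjoint subintervals of a fixed interval (hence countably many) and that fundamentality plus finiteness of breakpoints prevents infinite descent from accumulating. I expect the bulk of the paper's argument to be a careful induction or transfinite bookkeeping making precisely this tree-countability rigorous.
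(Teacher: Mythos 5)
Your proposal correctly identifies the easy parts (stacks are countable; incomparable orbitals are disjoint), but the step you yourself flag as ``the real work'' is exactly the gap, and the fix you sketch cannot work as stated. First, your proposed tree of ``levels'' below a maximal orbital need not exist: there is no reason for an orbital $C$ to have any orbitals maximal among those properly below it (chains in $\mathcal{O}(G)$ can be densely ordered), so the level-by-level rational tagging has nothing to attach to. Second, even granting a tree structure, ``all chains countable plus all antichains (or levels) countable'' does not imply the poset is countable --- Aronszajn-type trees are exactly posets with countable chains and countable levels but $\aleph_1$ nodes --- so no amount of purely order-theoretic or transfinite bookkeeping can close this; an extra geometric input is required. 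The paper supplies it metrically, not combinatorially (Lemma \ref{orbitals count}): stratify orbitals by length into the bands $\left(\left(2/3\right)^n,\left(2/3\right)^{n-1}\right]$, note that the union of the orbitals in one band is second countable, hence the band has a countable subcover, and show that any two same-band orbitals meeting a common member of that subcover must be nested (two disjoint ones inside it would force one to have length at most half, yet at least $2/3$, of its length). Thus each band decomposes into countably many stacks, each countable by the tower lemma (Lemma \ref{towers count}), and $\mathcal{O}(G)$ is countable. This length-band argument is the idea missing from your outline.

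There is a second, smaller gap: your ``address'' includes ``the finite bump-data of $f$,'' but breakpoints and slopes are arbitrary reals, so finiteness alone does not place this data in a countable set; a priori uncountably many distinct bumps could share one orbital $O$. The paper needs a separate argument (Lemma \ref{signatures count}) for this: bouncepoints of pairs $f,g$ are endpoints of orbitals of $fg^{-1}$, hence range over a countable set once $\mathcal{O}(G)$ is countable, and the possible slopes leaving bouncepoints (and the initial slopes) are countable by Lemma \ref{it counts}, which is imported from Bleak's algebraic classification and is not elementary. With those two ingredients in place, your final step (an element of $G$ is a choice of finitely many orbitals and one bump per orbital, all from countable sets) matches the paper's conclusion.
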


 The theorem follows from the next 3 lemmas which result in a proof that $\mathcal{SO}(G)$ is countable if $G$ is a group with transition chains.

For the reminder of the section, let $G$ be a subgroup of PLo(I) which has no transition chains.

\begin{lem}\mylabel{towers count}
Towers and stacks in $G$ are countable.
\end{lem}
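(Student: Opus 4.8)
Since a tower is order isomorphic to its underlying stack, and a subset of a tower is a tower, it suffices to show every stack (chain in the poset $\mathcal{O}(G)$ of orbitals) is countable. So let $\mathcal{C}$ be a stack in $G$, i.e. a collection of orbitals of elements of $G$ that is totally ordered by inclusion. The plan is to inject $\mathcal{C}$ into a countable set. The natural candidate is the set of pairs of rational numbers, via left and right endpoints; the obstacle is that orbitals in a chain may share endpoints (e.g. a descending sequence all sharing a left end), so a naive "send $(x,y)$ to $(x,y)$" fails both because endpoints need not be rational and because of shared ends.

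**Key steps.** First I would observe that the orbitals in $\mathcal{C}$, being a chain of open intervals ordered by inclusion, are "nested," so any two distinct members $A=(x_1,y_1)\subsetneq B=(x_2,y_2)$ satisfy $x_2\le x_1<y_1\le y_2$. By Lemma \ref{end} (which needs no transition chains), distinct orbitals of $G$ cannot share an end, so in fact $x_2<x_1$ \emph{and} $y_1<y_2$ — the containment is strict at both ends. This is the crucial input: it means that for $A\subsetneq B$ in $\mathcal{C}$, the gap regions $(x_2,x_1)$ on the left and $(y_1,y_2)$ on the right are nonempty open intervals. Hence to each orbital $A=(x,y)\in\mathcal{C}$ we can assign a rational $q_A\in(x,y)$ (by density of $\mathbb{Q}$); I claim $A\mapsto q_A$ need not be injective, so instead assign to $A$ a \emph{pair} of rationals: pick $q_A$ in the left-gap between $A$ and the next-larger orbital when one exists, or just $q_A\in A$ otherwise — better, use a uniform trick below.

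The cleanest approach: map each $A=(x_A,y_A)\in\mathcal{C}$ to a rational $q_A$ with $x_A<q_A<y_A$. If $A\subsetneq B$ are both in $\mathcal C$, I do \emph{not} get $q_A\ne q_B$ for free, so refine: since $x_B<x_A$ strictly, pick $q_A\in(x_A,y_A)$ \emph{and} a rational $r_A\in(x_A, m_A)$ where $m_A$ is the minimum of $y_A$ and of all $x_C$ for $C\in\mathcal C$ with $C\subsetneq A$... this risks an empty intersection if $\mathcal C$ has no minimum. So I would instead argue directly: consider the map sending $A\in\mathcal C$ to the pair $(\,\lfloor x_A\rfloor\text{-type data}\,)$ — no. The robust argument is: a chain of open intervals in $\mathbb R$ under inclusion, with \emph{strict} containment at both endpoints, is order-isomorphic to a subset of $(\mathbb R,\le)$ via $A\mapsto x_A$ (since $A\subsetneq B\iff x_B<x_A$, this is an order-reversing injection into $\mathbb R$). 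A subset of $\mathbb R$ that is a chain... is of course possibly uncountable. So endpoints alone do not suffice.

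**The real mechanism.** I would use that the left endpoints $\{x_A : A\in\mathcal C\}$ form a set of reals such that the open intervals $(x_B, x_A)$ for $A\subsetneq B$ "adjacent enough" are disjoint — concretely, for any three orbitals $A\subsetneq A'\subsetneq A''$ in $\mathcal C$, the left gaps $(x_{A'},x_A)$ and $(x_{A''},x_{A'})$ are disjoint nonempty open intervals. Thus choosing a rational in the left gap immediately below each $A$ (for all $A$ that are not the largest element of $\mathcal C$) gives an injection of $\mathcal C\setminus\{\max\}$ into $\mathbb Q$: if $A\ne B$, say $A\subsetneq B$, then the rational chosen for $A$ lies in $(x_{B'},x_A)$ where $B'$ is... hmm, "immediately below" is not well-defined without successors. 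The fix is standard: for each $A\in\mathcal C$ pick \emph{any} rational $q_A\in(x_A,y_A)\cap\mathbb Q$; if $A\subsetneq B$ then either $q_A\notin B=(x_B,y_B)$, in which case $q_A\ne q_B$ since $q_B\in B$, or $q_A\in B$, in which case... it could equal $q_B$. To kill this, assign to $A$ the pair $(q_A, q'_A)$ where $q'_A\in(x_A,y_A)\cap\mathbb Q$ with $q'_A<x_C$ whenever... — I will in the write-up simply take, for each $A$, a rational in the left gap $(x_A, \sup\{x_C: C\in\mathcal C,\ C\subsetneq A\})$ when $\downarrow A\cap\mathcal C\ne\emptyset$ (this sup is $<x_A$? no, $\ge x_A$) — I'll take a rational in $(\inf\{x_C\},x_A)$... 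The honest clean statement: for $A\subsetneq B$ in $\mathcal C$ the interval $(x_B,x_A)$ is nonempty and disjoint from $(x_A, x_{A'})$ for any $A'\subsetneq A$; choosing $q_A\in(x_B,x_A)\cap\mathbb Q$ for some/any $B\supsetneq A$ and $q_A\in A\cap\mathbb Q$ if $A=\max\mathcal C$ yields the injection, because the $q_A$ then land in pairwise-disjoint intervals. I would present exactly this: \textbf{disjointness of the endpoint gaps is the key step, and the place where Lemma \ref{end} (no shared ends) is essential.}

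\begin{proof}[Proof sketch]
By Lemma (order isomorphism of a tower and its stack), it suffices to prove every stack $\mathcal{C}\subseteq\mathcal{O}(G)$ is countable. Members of $\mathcal{C}$ are open intervals linearly ordered by inclusion. If $A=(x_A,y_A)$ and $B=(x_B,y_B)$ are distinct members with $A\subset B$, then $x_B\le x_A<y_A\le y_B$, and by Lemma \ref{end} distinct orbitals of $G$ share no end, so both inequalities on the outside are strict: $x_B<x_A$ and $y_A<y_B$. Consequently, for any chain $A\subsetneq A'$ in $\mathcal C$ the open interval $I_{A}:=(x_{A'},x_A)$ is nonempty, and if $A\subsetneq A'\subsetneq A''$ then $I_A=(x_{A'},x_A)$ and $I_{A'}\subseteq(x_{A''},x_{A'})$ lie on opposite sides of $x_{A'}$, hence are disjoint; iterating, the family $\{I_A\}$ over all $A\in\mathcal C$ that are not the maximum of $\mathcal C$ consists of pairwise disjoint nonempty open intervals. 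Choose $q_A\in I_A\cap\mathbb Q$ for each such $A$, and if $\mathcal C$ has a maximum $M$ choose $q_M\in M\cap\mathbb Q$. The resulting map $A\mapsto q_A$ is injective: distinct $A,B$ with $A\subsetneq B$ have $q_A,q_B$ in the disjoint intervals $I_A,I_B$ (or $q_B\in B$, $q_A\in I_A$ with $I_A\cap B=\emptyset$ when $B=M$), so $q_A\ne q_B$. Hence $|\mathcal C|\le|\mathbb Q|=\aleph_0$. Since towers are order isomorphic to stacks, towers in $G$ are countable as well.
\end{proof}

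I expect the main obstacle to be exactly the injectivity bookkeeping above: making precise that one can consistently choose the witnessing rationals so that they are pairwise distinct, using only that the containments are strict at both ends (Lemma \ref{end}) — not, say, that $\mathcal C$ is well-ordered, which it need not be.
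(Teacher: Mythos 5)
There is a genuine gap, and it is precisely the point your own ``plan'' kept circling: the only property of the stack your sketch actually uses is that distinct orbitals share no ends (Lemma \ref{end}), i.e.\ that containments in the chain are strict at both endpoints. That hypothesis alone cannot give countability: the intervals $\bigl(\tfrac{1-t}{2},\tfrac{1+t}{2}\bigr)$ for $t\in(0,1)$ form an uncountable chain of open subintervals of $I$, strictly nested at both ends, with all endpoints distinct. Concretely, the step that fails is the claimed pairwise disjointness of your gap intervals $I_A=(x_{A'},x_A)$. Your verification covers only the consecutive configuration $A\subsetneq A'\subsetneq A''$ in which the chosen ``parent'' of $A$ is $A'$ and that of $A'$ is $A''$; ``iterating'' does not handle arbitrary parent choices (if $A_1\subsetneq A_2\subsetneq A_3$ and both $A_1,A_2$ are assigned parent $A_3$, then $I_{A_2}=(x_3,x_2)\subseteq(x_3,x_1)=I_{A_1}$), and when the left endpoints of larger orbitals accumulate at $x_A$ from below --- as happens in the example above --- there is no admissible choice of parent at all, because the required gap is empty. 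Since the statement ``a strictly nested chain of open intervals is countable'' is false, no bookkeeping with rationals chosen from these endpoint gaps can close the argument.

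What is missing is the dynamical input the paper uses, namely Lemma \ref{fd}: because $G$ has no transition chains, every orbital of the stack properly contained in $A$ lies in a fundamental domain of a signed orbital $(A,f)$, where $f\in G$ is a signature for $A$ (taken, after inverting if necessary, to move points right on $A=(a,b)$). Fixing one $B\in\downarrow A$ in the stack and a point $x\in B$, and setting $c=(x)f^{-1}$, the single interval $(a,c)$ misses \emph{every} member of $\downarrow A$: members of $\downarrow B$ lie inside $B$, which is disjoint from $(a,c)$, while members of $\uparrow B$ contain $x$ and lie in a fundamental domain of $(A,f)$, so their left endpoints exceed $(x)f^{-1}=c$. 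Assigning to each orbital $A$ of the stack this interval at its left end produces genuinely pairwise disjoint intervals (the interval for $A'\subsetneq A$ sits inside $A'\in\downarrow A$, which $(a,c)$ avoids), and picking a rational in each gives the desired injection. This uniform fundamental-domain gap --- not the gap to the next larger orbital's endpoint --- is the essential mechanism, and it is exactly where ``no transition chains'' enters beyond the no-shared-ends consequence you relied on.
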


\begin{proof}
Since every stack is in bijection with some tower by simply picking signatures for each of the orbitals, it is enough to show that the underlying stack of every tower is countable. Let $T$ be a tower in $G$ and $(A,f) \in T$.  Let $A=(a,b)$.  Elements of $T$ are in bijection with elements of $\mathcal{O}(T)$. We produce a collection of disjoint open intervals of $[0,1]$ which are in bijection with elements of $\mathcal{O}(T)$.  Since each of these intervals contains a rational, the set of all the intervals is countable.
Consider the downset of $A$ in $\mathcal{O}(T)$.  We claim there is a $c$ with $a< c < b$ such that $(a,c) \cap C  = \emptyset$ for every $C \in \downarrow A$. The interval $(a,c)$ is the one we seek.  The claimed property of $c$ will imply all these 
intervals are disjoint.

If $f$ moves points left on $A$, then replace $f$ with its inverse.  This does not hinder our argument, since $f$ and $f^{-1}$ have the same orbitals.  Let $(B,g)$ be an element of $\downarrow (A,f)$ in T.  Then $B$ separates $\downarrow A \text{ in } \mathcal{O}(T)$ into two pieces: $\uparrow B$ in $\downarrow A$ and $\downarrow B$ in $\downarrow A$.  Let $x \in B$ and $c = (x)f^{-1}$.  Then $(a,c) \cap B = \emptyset$ due to Lemma \ref{fd}.  Also $(a,c)\cap C = \emptyset$ for any $C \in \downarrow B$. Furthermore, since intervals in $\uparrow B$ contain $x$ and are contained in a fundamental domain of $(A,f)$, they do not contain $(x)f^{-1}$.  Thus intervals in $\uparrow B$ have left endpoints larger than $c$ and so $C \cap (a, c) = \emptyset$ for any $C \in \uparrow B$.  Therefore, $C \cap (a,c) = \emptyset$ for all $C \in \downarrow A$ and the proof is complete. 
\end{proof}

\begin{lem}\mylabel{orbitals count}
The set $\mathcal{O}(G)$ is countable.
\end{lem}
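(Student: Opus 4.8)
The plan is to exploit the fact — just proved in Lemma \ref{towers count} — that every \emph{stack} in $G$ is countable, and combine it with a countability-of-antichains argument. Recall that $\mathcal{O}(G)$ is a poset under inclusion and, since $G$ has no transition chains (Remark \ref{containment}), any two orbitals that meet are comparable. An orbital has only finitely many breakpoints worth of combinatorial data, but the real leverage is: fix a rational $q\in[0,1]$ and consider $\mathcal{O}_q := \{A\in\mathcal{O}(G)\mid q\in A\}$. Any two members of $\mathcal{O}_q$ intersect (they both contain $q$), hence are comparable by Remark \ref{containment}; so $\mathcal{O}_q$ is a chain in $\mathcal{O}(G)$, i.e.\ a stack in $G$, and is therefore countable by Lemma \ref{towers count}. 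Since every orbital, being a nonempty open interval, contains some rational, we get
\[
\mathcal{O}(G) = \bigcup_{q\in\mathbb{Q}\cap[0,1]} \mathcal{O}_q,
\]
a countable union of countable sets, hence countable.

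So the whole lemma collapses to the observation that each $\mathcal{O}_q$ is a stack, which is immediate from the no-transition-chains hypothesis via Remark \ref{containment}, together with the already-established Lemma \ref{towers count}. I would write it exactly in this order: (1) fix a rational $q$ and define $\mathcal{O}_q$; (2) invoke Remark \ref{containment} to see $\mathcal{O}_q$ is a chain, i.e.\ a stack; (3) apply Lemma \ref{towers count} to conclude $\mathcal{O}_q$ is countable; (4) cover $\mathcal{O}(G)$ by the $\mathcal{O}_q$ over all rationals and conclude.

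I do not anticipate a genuine obstacle here — the content has been front-loaded into Lemma \ref{towers count}. The only point requiring a moment's care is making sure the rational chosen for an orbital genuinely witnesses membership in one of the $\mathcal{O}_q$: since orbitals are open intervals of positive length this is trivial, but one should state it so the union identity is unambiguous. An alternative, slightly more structural phrasing would be to note that $\mathcal{O}(G)$ has countable width (every antichain of orbitals is countable, because distinct incomparable orbitals are disjoint and each contains a rational) and countable height contributions along each chain, and then apply a Dilworth-type covering; but that is strictly more machinery than needed, so I would stick with the direct rational-covering argument above.
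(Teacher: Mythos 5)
Your proof is correct, but it is not the paper's argument. The paper localizes by \emph{length} rather than by a common point: it partitions $(0,1]$ into the geometric bands $I_n=((2/3)^n,(2/3)^{n-1}]$, looks at the orbitals whose lengths fall in a fixed band, extracts a countable subcover of their union using second countability of $\mathbb{R}$, and then shows that the orbitals of that band meeting a fixed member of the subcover form a stack --- comparability there is forced not by a shared point but by a length computation (two disjoint orbitals inside $O$ would give one of length at most $\tfrac12 m(O)$ via Lemma \ref{end}, contradicting $m(A)\ge \tfrac23 m(O)$), after which Lemma \ref{towers count} finishes. Your route instead fixes a rational $q$, notes via Remark \ref{containment} that $\mathcal{O}_q=\{A\in\mathcal{O}(G): q\in A\}$ is a chain, hence a stack, hence countable by Lemma \ref{towers count}, and covers $\mathcal{O}(G)$ by the countably many sets $\mathcal{O}_q$. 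Both proofs rest on the same key lemma (countability of stacks), but yours is shorter and more elementary: it avoids the measure-band partition, the Lindel\"of-style subcover, and the appeal to Lemma \ref{end}, at no loss of generality, since every orbital is a nonempty open interval and so contains a rational. The paper's extra machinery does not buy anything that your argument misses here; it is simply a different way of forcing pairwise comparability before invoking Lemma \ref{towers count}.
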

\begin{proof}
Let $L$ be the set of all lengths of elements in $\mathcal{O} (G)$.  $L$ is some subset of $(0,1]$. 
Let
$m : \mathcal{O} (G) \rightarrow L$ be the usual measure on intervals, so $m$ maps each orbital to its length.  

For each positive integer $n$, let $I_n = \left(\left( 2/3 \right)^n, \left(2/3 \right)^{n-1}\right]$.  The set $\displaystyle C = \{I_n | n \in \mathbb{N} \}$ is a partition of the interval $(0,1]$.  Since C is countable, it is enough to show that $m$ maps countably many elements of $\mathcal{O}(G)$ into each element of $C$.  Consider an arbitrary element $I_n$ of $C$, let $R$ be the set of all elements of $\mathcal{O}(G)$ which $m$ maps into $I_n$, and let $K$ be the union of elements of $R$. 

Equip $\mathbb{R}$ with the topology generated by open intervals, and $K$ with the corresponding subspace topology.  Since $\mathbb{R}$ has a countable basis, so does $K$.  Therefore, the open cover $R$ of $K$ has a countable subcover $S$.  If every element of $S$ intersected only countably many elements of $R$, then $R$ would be countable.  Hence, we will show every element of $S$ intersects only countably many elements of $R$.  Let $O$ be an arbitrary element of $S$.
Let $U$ be the subset of $R$ whose elements intersect $O$.   Our aim now is to show $U$ is countable.  We do this by showing $U$ is a stack and therefore countable by the previous lemma.

To show $U$ is a stack, we must show that every pair of distinct elements $A,B \in U$ are comparable.  Since $G$ has no transition chains, intersection of orbitals implies containment.  Thus we can divide $U - \{O\}$ into two pieces:  $\uparrow O$ and $\downarrow O$, 
or those properly containing $O$ and those properly contained in $O$, respectively. Assume toward a contradiction that $A$ and $B$ are disjoint.  Then they must be contained in $\downarrow O$.  Because they are contained in $O$, one of them will have length less one-half the length of $O$ by Lemma \ref{end}. Assume it is $A$. Since $m(A), m(O) \in I_n$, multiplying their lengths by 2/3 results in a number which is in $I_{n-1}$, hence not in $I_n$.  In particular, $\ds \frac{2}{3} m(O) \leq m(A)$.  Thus $\ds \frac{2}{3} m(O) \leq m(A) \leq \frac{1}{2} m(O)$, a contradiction because $m(A) > 0$.  Therefore $A$ and $B$ are not disjoint, so they must be comparable.
\end{proof}

The next lemma will require the following definition.

Define a {\bf bouncepoint} of a pair $f,g$ of PL functions to be a point $b$ where 1. $(b)f = (b)g$, 2. there is some open interval $(b,c)$ on which $(x)g \neq (x)f$, and 3. $b$ is a breakpoint of $f$ or $g$.  By a {\bf bouncepoint} $b$ of a single function $f$, we mean $b$ is a breakpoint of $f$ and there exists some function $g$ such that $b$ is a bouncepoint of the pair $f,g$.  


 We will also need the following Lemma which is a consequence of results from Section 3.3.2 of  paper \cite{alg}:  
 
\begin{lem}\mylabel{it counts}
Given an orbital $O$ of a group without transition chains, there are at most countably many possible initial and terminal slopes for elements with that orbital. 
\end{lem}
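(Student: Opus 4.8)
The plan is to recast the claim as a countability statement about a subgroup of $(\mathbb{R}_{>0},\times)$. Write $O=(a,b)$ and let $G_O=\{g\in G : O\in\mathcal{O}(g)\}$. Every $g\in G_O$ fixes $a$ and $b$, and being piecewise linear it is affine on a right-neighbourhood of $a$, so $(x)g=a+s_g(x-a)$ there, where $s_g>0$ is the initial slope of $g$ on $O$; moreover $s_g\neq 1$, since otherwise $g$ would fix an interval $(a,a')\subseteq O\subseteq Supp(g)$. Let $\Sigma=\{1\}\cup\{s_g : g\in G_O\}$, and let $\Sigma'$ be the analogous set of terminal slopes on $O$. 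It suffices to show $\Sigma$ and $\Sigma'$ are countable, and by reflecting the interval these two cases are symmetric, so I concentrate on $\Sigma$.

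The first step is to observe that $\Sigma\leq\mathbb{R}_{>0}$, and this is exactly where ``no transition chains'' is used. Given $g,h\in G_O$, the product $gh$ fixes $a$ and is affine with slope $s_gs_h$ near $a$. If $s_gs_h\neq 1$ then $a\in\overline{Supp(gh)}$, so $gh$ has an orbital $O'$ with left endpoint $a$; since $O'\cap O\neq\emptyset$, Remark \ref{containment} gives $O'\subseteq O$, $O\subseteq O'$, or $O=O'$, and as $O'$ and $O$ then share the endpoint $a$, Lemma \ref{end} forces $O'=O$. Hence $gh\in G_O$ with initial slope $s_gs_h$, so $\Sigma$ is closed under products (the case $s_gs_h=1$ being trivial); it is closed under inverses because $g^{-1}\in G_O$ and $s_{g^{-1}}=s_g^{-1}$. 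Thus $\Sigma$ is a subgroup of $(\mathbb{R}_{>0},\times)$, in particular torsion-free abelian, and the nontrivial elements of $\Sigma$ are precisely the initial slopes on $O$ of elements of $G$ with orbital $O$.

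Next I would show $\Sigma$ is locally cyclic. Given $s_1,\dots,s_k\in\Sigma$ realized by $g_1,\dots,g_k\in G_O$, the subgroup $H=\langle g_1,\dots,g_k\rangle\leq G$ is finitely generated, hence by Theorem \ref{loc solv} (applied to $H$, which inherits having no transition chains) locally solvable, hence solvable. Each $g_i$ still has $O$ as an orbital, so $s_1,\dots,s_k$ are among the initial slopes on $O$ of the solvable group $H$. At this point I invoke the analysis of solvable subgroups of $PLo(I)$ in Section 3.3.2 of \cite{alg}: in a solvable subgroup the initial (and, symmetrically, the terminal) slopes on a fixed orbital lie in a cyclic subgroup of $(\mathbb{R}_{>0},\times)$. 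Therefore $\langle s_1,\dots,s_k\rangle$ is cyclic. Since a torsion-free abelian group in which every finitely generated subgroup is cyclic has rank at most one, $\Sigma$ embeds (via $\log$) in $(\mathbb{Q},+)$ and is countable; the same applies to $\Sigma'$, which proves the lemma.

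The only non-elementary ingredient is the cited structural fact from \cite{alg}, so the main obstacle is to pin down and quote its precise statement; everything else is routine manipulation of germs at the endpoints of $O$ together with Lemma \ref{end}, which guarantees that composing elements of $G_O$ cannot produce a differently sized orbital at $a$. One could instead attempt to prove slope-cyclicity in the solvable case directly---commutators of elements of $G_O$ have trivial germ at both ends of $O$, hence strictly smaller orbitals, which builds towers and should force a bound via the derived length---but carrying out that induction is essentially the work done in \cite{alg}, so I would not redo it here.
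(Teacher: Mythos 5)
Your argument is sound, and it rests on exactly the same external input as the paper: the paper in fact gives no proof of this lemma at all, saying only that it is ``a consequence of results from Section 3.3.2 of \cite{alg}.'' What you have written is essentially the bridge that such a citation leaves implicit, and it is carried out correctly. The closure of the initial-slope set under products is a legitimate use of Remark \ref{containment} and Lemma \ref{end}: if the germ of $gh$ at the left end of $O$ is nontrivial, then $gh$ has an orbital meeting $O$ and sharing its left end, hence equal to $O$. The passage from ``no transition chains'' to the solvable case via Theorem \ref{loc solv} and finite generation is precisely the reduction one needs in order to apply Bleak's results, since his Section 3.3.2 concerns solvable groups while the lemma here is stated for transition-chain-free (i.e.\ locally solvable) groups; and local cyclicity of a subgroup of $(\mathbb{R}_{>0},\times)$ does force rank at most one, hence countability. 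The one caveat --- which you flag yourself --- is that all of the real content still sits in the quoted structural fact from \cite{alg} (cyclicity, or at least countability, of the slope group at a fixed orbital of a solvable group), so your proof is no more self-contained than the paper's bare citation; but as a derivation of the stated lemma from that source it is more explicit than what the paper provides, and in particular it records the reduction from the locally solvable case to the finitely generated solvable case, which the paper's one-line attribution silently assumes.
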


At last, we state the final lemma for our proof of Theorem \ref{countable}.

\begin{lem}\mylabel{signatures count}
Let $F(O)$ be the set of all bumps of functions of $G$ which have orbital $O$.  Then $F(O)$ is countable for any $O \in \mathcal{O}(G)$.
\end{lem}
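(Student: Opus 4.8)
The plan is to fix an orbital $O \in \mathcal{O}(G)$ and show that a bump of $G$ with orbital $O$ is determined by countably much data, so that $F(O)$ injects into a countable set. The natural data to read off a one-bump function $f$ restricted to $O$ are: (i) its initial slope on $O$, (ii) its terminal slope on $O$, and (iii) the finite list of its breakpoints inside $O$ together with the slopes on the affine components between them — equivalently the bouncepoints of $f$ on $O$. Lemma~\ref{it counts} already tells us there are only countably many choices for (i) and (ii), so the crux is to control the interior breakpoint/slope data, and the key mechanism should be that a bouncepoint of such an $f$ must be a \emph{bouncepoint of the orbital $O$ itself} (a breakpoint forced by the interaction of $f$ with other elements sharing comparable orbitals, in the sense of the bouncepoint definition just given), and that the set of bouncepoints associated to a fixed orbital in a group without transition chains is countable.

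First I would set up the injection: to each $f \in F(O)$ associate the tuple consisting of its initial and terminal slopes on $O$ and the finite sequence of pairs (breakpoint of $f$ in $O$, slope of $f$ immediately to the right of that breakpoint), listed left to right. Two bumps with the same orbital $O$ and the same such tuple agree at a point and have the same derivative data propagating across $O$, hence are equal; so the map is injective. Next I would argue that every breakpoint of $f$ lying in the interior of $O$ is a bouncepoint of $f$ in the sense defined before the lemma: at such a breakpoint the one-sided slopes differ, so $f$ differs from the linear map with $f$'s left-hand slope on a small interval to the right, and this linear map is realized (on that small interval) by an element of $PLo(I)$ — indeed one can take an element of $G$, using Lemma~\ref{moveover} and the no-transition-chain structure, or simply observe the bouncepoint condition only requires \emph{some} PL function $g$, so a purely local linear $g$ suffices once we check condition~1 can be met. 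The point is that interior breakpoints of $f$ are breakpoints ``visible to'' the orbital $O$.

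Then the main step is to show that for a fixed orbital $O$ in a group without transition chains there are only countably many bouncepoints among all elements of $G$ with orbital (comparable to, or equal to) $O$; this is where I expect to lean on the results of Section~3.3.2 of \cite{alg} that already underlie Lemma~\ref{it counts}, presumably giving a countable set of ``special points'' attached to each orbital, so that each $f \in F(O)$ has all its interior breakpoints drawn from one fixed countable set $P_O \subset O$. Combined with Lemma~\ref{it counts} for the slopes, a bump $f \in F(O)$ is then encoded by a finite subset of the countable set $P_O$ together with, at each chosen point and at the two ends, a slope from a fixed countable set; the collection of all such finite encodings is countable, so $F(O)$ is countable.

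The main obstacle, I expect, is the middle step — pinning down that \emph{all} interior breakpoints of a bump in $F(O)$ are forced to lie in a single countable set depending only on $O$, rather than ranging freely. A priori nothing stops a function from having an arbitrary breakpoint strictly inside its orbital; the no-transition-chain hypothesis has to be invoked (via the cited machinery of \cite{alg}, which controls how bumps can sit inside one another and how slopes must match along shared ends) to rule this out. If the cited results only bound initial and terminal slopes, one needs an extra argument — perhaps an induction on the depth of the stack below $O$, peeling off one level at a time and using Lemma~\ref{fd} (towers are fundamental) to relate breakpoints of $f$ on $O$ to the ends of the finitely many orbitals immediately below $O$ — to conclude that the interior breakpoint set is governed by countably much data. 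Handling that reduction carefully is the part that will require the most work.
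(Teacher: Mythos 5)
There is a genuine gap, and it is exactly the step you flagged yourself: your encoding records \emph{all} interior breakpoints of $f$, so for countability you need every breakpoint of every $f\in F(O)$ to lie in one fixed countable set $P_O$, and no such fact is available. The cited machinery behind Lemma \ref{it counts} controls only initial and terminal slopes at an orbital's ends; it does not produce a countable set of ``special points'' inside $O$ from which all breakpoints must be drawn. (The countability of all breakpoints of elements of $G$ is a \emph{corollary} of the theorem being proved, so it cannot be fed in here.) Your attempted repair --- reading the bouncepoint definition so loosely that ``some function $g$'' may be an arbitrary local linear map, whence every breakpoint is a bouncepoint --- destroys the only mechanism by which bouncepoints can be counted: that mechanism is that for a pair $f,g$ \emph{in the group}, a bouncepoint is an endpoint of an orbital of $fg^{-1}\in G$, and orbital endpoints are countable by Lemma \ref{orbitals count}. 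With arbitrary $g$ allowed, ``bouncepoint'' just means ``breakpoint'' and you are back where you started. A secondary gap of the same kind: even granted a countable set of admissible interior points, you assert without argument that only countably many slopes can occur at each such point; Lemma \ref{it counts} speaks only of slopes at orbital ends, so an extra step is needed there too.

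The paper's proof sidesteps the problem you could not solve by simply \emph{not} encoding all breakpoints. It fixes the number of breakpoints ($B_n$), and maps $f$ to its initial slope on $O$ together with only its bouncepoints (relative to other elements of $G$) and the slopes of $f$ leaving those bouncepoints. Injectivity does not require full breakpoint data: if $f\neq g$ have the same initial slope, the right endpoint $b$ of the maximal interval on which they agree is a bouncepoint of the pair, hence a bouncepoint of $f$ or of $g$, so it (or the slope leaving it) appears and differs in the two encodings. Countability of the target then comes from two group-theoretic facts: bouncepoints are endpoints of orbitals of elements $fg^{-1}\in G$, countable by Lemma \ref{orbitals count}; and slopes leaving a bouncepoint are countable because, by the chain rule, uncountably many such slopes would give uncountably many initial slopes for elements of $G$ on orbitals with that left endpoint, contradicting Lemma \ref{it counts}. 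Your proposed induction on the stack below $O$ using Lemma \ref{fd} is not developed enough to assess, but as stated the middle step of your argument does not go through, so the proof is incomplete.
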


\begin{proof}
Let $B_n = \{f \in F(O) \, | \, f \text{ has exactly } n \text{ breakpoints}\}$.  Since $\cup^{\infty}_{i=1} B_i = F(O)$, it is enough to show that $B_n$ is countable for $n \in \mathbb{N}$. We define an injective map $\phi$ from the set $B_n$ to a countable set.  Let $f \in B_n$, $x_0$ the left endpoint of $O$, and $s_0$ the initial slope of $f$ leaving $x_0$ (i.e., the initial slope of $f$ on $O$). The function $f$ has $n$ breakpoints, so it has at most $n$ bouncepoints.  Suppose $f$ has $m$ bouncepoints.  Let the bouncepoints of $f$ be $b_1, b_2, . . . ,b_m$, and assume the order on the index set matches that of the points. Let $s_1, s_2, . . . ,s_m$ be the slopes of $f$ leaving $b_1, b_2, . . . , b_m$, respectively, i.e., $s_i$ is the slope of the affine component with left endpoint $b_i$ for $1 \leq i \leq m$. Define $\phi(f)$ to be the ordered set of information $\{s_0, b_1, s_1, b_2, s_2, . . . , b_m, s_m\}$.  

First, we argue $\phi$ is injective.  Assume $f,g \in B_n$ such that $f \neq g $.  If $f,g$ have different initial slopes, we are done, so assume they have the same initial slope.  Consider the maximal closed interval $[x_0, b]$ on which which $f = g$.  Since $f,g$ have the same initial slope, $b$ is a bouncepoint. Note that if $f,g$ don't have the same initial slope, there may not exist any bouncepoint for the pair.  Thus, it is essential that our map $\phi$ include initial slopes. Because $f(x) = g(x)$ for $x<b$, $\phi(f)$ and $\phi(g)$ are the same until the point $b$ appears in one or the other.  At that slot in the ordered sets $\phi(f), \phi(g)$, there are two possibilities: 1. $b$ is a breakpoint (hence a bouncepoint) of exactly one of $f$ or $g$; or 2. $b$ is a breakpoint of both the single functions $f$ and $g$, in which case the next slopes must differ.  In either case, $\phi(f) \neq \phi(g)$.

Now we argue Im($\phi$) is countable by showing that the choices for the $s_i$'s and the $b_i$'s are countable.  Observe that if $b$ is a bouncepoint of some pair $f,g$ in a group $G$ then $b$ is an endpoint of an orbital of $fg^{-1}$.  This is simply because $(b)f = (b)g$ and $(x)f \neq (x)g$ on some interval $(b,c)$.  Thus, by Lemma \ref{orbitals count}, the set $B$ of all possible bouncepoints of elements of $F(O)$ is countable.  Let $S$ be the set of all possible slopes leaving bouncepoints. If there were uncountably many possible slopes for $f$ emanating from some bouncepoint $b$, then applying the chain rule would result in uncountably many initial slopes emanating from $b$ for functions of the form $fg^{-1}$ where $b$ is a bouncepoint of $f,g$ and hence an orbital endpoint for $fg^{-1}$.  This is a contradiction to Lemma \ref{it counts}, so $S$ must also be countable.  Furthermore, all possible initial slopes $S_0$ are countable by Lemma \ref{it counts} (not all orbital endpoints are bouncepoints, so we must note this separately from the previous argument).  Let $S_0, S, B$ include the empty set as an element.  Then it's easy to see that Im$\phi$ injects into the ordered product $S_0 \x B \x S \x B \x S . . . \x B \x S$ where there are $n$ copies of $B \x S$. Therefore, Im($\phi$) is countable.
\end{proof}

\begin{proof}[Proof of Theorem \ref{countable}]
Define $G_n = \{g \in G \, | \, g \text{ has exactly } n \text{ orbitals}\}$.  Then, $\cup^{\infty}_{i=0} G_i = G$, so we need only show $G_n$ is countable.  $G_0$ is just the identity element, so it is countable.  An element of $G_n$ is determined by a choice of $n$ orbitals and a choice of one bump for each of those orbitals. Since each of these $2n$ choices are selected from countable sets by Lemmas \ref{orbitals count} and \ref{signatures count}, $G_n$ injects into a countable set, hence is countable. 
\end{proof} 

To state consequences of the main theorem, we start with the following definition.

Define a {\bf corner} of a pair $f,g$ of PL functions to be a point $b$ where 1. $(b)f = (b)g$, 2. there is some open interval $(b,c)$ on which $(x)f \neq (x)g$, and 3. $b$ is in the interiors of affine components for both $f$ and $g$.  

The following corollaries are related to pairs of functions and dynamics of points.

\begin{cor}
 The set of all corners of elements of $G$ is countable.
\end{cor}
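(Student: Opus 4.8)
The plan is to reuse, in a cleaner form, the observation made inside the proof of Lemma~\ref{signatures count} about bouncepoints: a point at which two group elements agree and then immediately disagree is forced to be an orbital endpoint of their ``difference''. Here that yields an injection of the set of corners of elements of $G$ into the set of endpoints of orbitals of $G$, which is countable by Lemma~\ref{orbitals count}.

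In detail: let $b$ be a corner of a pair $f,g$ of elements of $G$, and set $h \colonequals fg^{-1}$, which lies in $G$ since $G$ is a group. Condition~(1) in the definition of a corner gives $(b)h = (b)gg^{-1} = b$, so $b \notin Supp(h)$. Condition~(2) supplies an open interval $(b,c)$ on which $(x)f \neq (x)g$; since $g^{-1}$ is injective, $(x)h \neq x$ for every $x \in (b,c)$, so $(b,c) \subseteq Supp(h)$. Because $Supp(h)$ is open, the point $b$ — a fixed point of $h$ with an adjacent interval of moved points — is the left endpoint of an orbital of $h$, hence an endpoint of a member of $\mathcal{O}(G)$. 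Thus the set of all corners of elements of $G$ is contained in the set of all endpoints of orbitals in $\mathcal{O}(G)$, which has at most twice the cardinality of $\mathcal{O}(G)$; the latter is countable by Lemma~\ref{orbitals count}, so the corollary follows.

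I do not expect any genuine obstacle. Condition~(3) of the definition of a corner (that $b$ lies in the interiors of affine components of both $f$ and $g$) plays no role in the counting and is simply discarded; only conditions~(1)--(2) — the same two conditions that open the definition of a bouncepoint — do any work, which is why the argument parallels the bouncepoint remark in Lemma~\ref{signatures count}. The one step meriting a word of care is the elementary point-set claim that a fixed point of $h$ with an adjacent interval of moved points is an orbital endpoint of $h$, and this is immediate from the support of an element of $PLo(I)$ being open.
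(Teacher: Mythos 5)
Your argument is correct and is essentially the paper's own proof: both reduce a corner $b$ of a pair $f,g$ to an orbital endpoint of the element $fg^{-1}\in G$ (the paper writes $g^{-1}f$, but the idea is identical) and then invoke the countability of $\mathcal{O}(G)$ from Lemma~\ref{orbitals count}. Your added observations --- that condition (3) of the definition of a corner is irrelevant to the count, and the explicit check that a fixed point adjacent to an interval of moved points is an orbital endpoint --- are fine and only make explicit what the paper leaves implicit.
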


\begin{proof}
 This follows from Lemma \ref{orbitals count} in the same way that bouncepoints are shown to be countable: If $c$ is a corner of the pair $f,g$ then $g^{-1}f$ has orbital beginning at $c$.  Hence every corner corresponds to some orbital of $G$, the set of which is countable.  
\end{proof}



\begin{cor}
The set of all breakpoints of elements in $G$ is countable. 
\end{cor}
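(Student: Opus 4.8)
The plan is to obtain this as an immediate consequence of the countability of $G$ itself, which is exactly Theorem \ref{countable}. First I would recall that, by the very definition of $PLo(I)$, every element $g \in G$ has only finitely many breakpoints; write $B_g$ for this finite set. The set of all breakpoints of elements of $G$ is then precisely $\bigcup_{g \in G} B_g$, a union of finite sets indexed by the set $G$, which is countable by Theorem \ref{countable}. A countable union of finite sets is countable, so we are done.

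There is an alternative route that parallels the proof of the preceding corollary on corners, namely trying to attach each breakpoint to an orbital of some element of $G$ and then invoking Lemma \ref{orbitals count}. However, unlike a corner or a bouncepoint, a breakpoint of $g$ sitting in the interior of an orbital need not be an endpoint of any orbital of an element of $G$ (producing a witnessing function would require staying inside $G$), so this path does not obviously close up. I would therefore present the countable-union argument, which is the cleaner one.

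I expect no genuine obstacle: all the work is already contained in Theorem \ref{countable} — equivalently in Lemmas \ref{towers count}, \ref{orbitals count}, and \ref{signatures count} — and this corollary is a one-line deduction from it.
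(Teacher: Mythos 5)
Your argument is exactly the paper's: since $G$ is countable by Theorem \ref{countable} and each element has only finitely many breakpoints, the set of all breakpoints is a countable union of finite sets, hence countable. The proposal is correct and takes essentially the same approach as the paper.
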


\begin{proof}
Each function in $G$ is completely determined by a finite ordered list $\{b_0, b_1, b_2, \cdots, b_n\}$ of breakpoints.  Since $G$ is countable, the union of all these lists is countable.  
\end{proof}

Now we provide consequences related to subgroups of PLo(I).

\begin{cor}
Every uncountable subgroup of PLo(I) contains two elements which generate a non-solvable subgroup.
\end{cor}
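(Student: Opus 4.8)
The plan is to read the corollary directly off Theorem~\ref{countable} together with Theorem~\ref{loc solv}. Let $H$ be an uncountable subgroup of $PLo(I)$. Since $H$ is not countable, the contrapositive of Theorem~\ref{countable} shows that $H$ must contain a transition chain: there are signed orbitals $(A,f),(B,g)\in\mathcal{SO}(H)$ with $A\cap B\neq\emptyset$, $A\not\subset B$, and $B\not\subset A$. In particular $f,g\in H$, with $A$ an orbital of $f$ and $B$ an orbital of $g$.

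Next I would pass to the two-generator subgroup $K=\langle f,g\rangle\leq H$. Since $f\in K$ and $g\in K$, the signed orbitals $(A,f)$ and $(B,g)$ both lie in $\mathcal{SO}(K)$, so the same pair is a transition chain in $K$; that is, $K$ has a transition chain. By Theorem~\ref{loc solv}, $K$ is therefore not locally solvable, and since every subgroup (in particular every finitely generated subgroup) of a solvable group is solvable, $K$ is not solvable either. Thus $f$ and $g$ are two elements of $H$ that generate a non-solvable subgroup, which is what we wanted.

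I do not expect a genuine obstacle here, since the corollary is essentially a repackaging of the two cited theorems. The only points deserving a sentence of care are the logical direction (``not locally solvable'' $\Rightarrow$ ``not solvable'', which is the contrapositive of ``solvable $\Rightarrow$ locally solvable''), and the minor observation that the transition chain really does involve \emph{two} genuinely distinct elements of $H$: distinct orbitals of a single function are disjoint, so a transition chain can never arise from one element alone. This makes the phrase ``two elements'' literally accurate, though it is not strictly needed for the argument.
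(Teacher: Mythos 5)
Your proof is correct and is essentially the paper's argument: the paper's one-line proof invokes the contrapositive of Theorem~\ref{countable} to produce a transition chain, leaving implicit exactly the steps you spell out (passing to $\langle f,g\rangle$, applying Theorem~\ref{loc solv}, and noting that not locally solvable implies not solvable). Your added remark that the two signatures of a transition chain are necessarily distinct elements is a correct, if minor, bonus.
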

\begin{proof}
Every uncountable subgroup of PLo(I) contains a transition chain by the contrapositive of Theorem \ref{countable}.  
\end{proof}

\begin{cor}
An ordered wreath product of copies of $\mathbb{Z}$ as defined by P. Hall in \cite{hall} does not embed in PLo(I) if the underlying ordered set is uncountable.
\end{cor}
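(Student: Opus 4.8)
The plan is to deduce the statement from Theorem \ref{main}: it suffices to show that Hall's ordered wreath product $W$ of copies of $\mathbb{Z}$ over an uncountable totally ordered set $\Lambda$ is both uncountable and locally solvable, since then no subgroup of PLo(I) can be isomorphic to $W$.

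First I would observe that $W$ is uncountable. In Hall's construction each $\lambda \in \Lambda$ contributes a canonical copy of its factor $H_\lambda = \mathbb{Z}$ inside $W$, and distinct coordinates yield distinct such subgroups; already the nontrivial generators of these copies form a set of size $|\Lambda|$, so $|W| \geq |\Lambda| > \aleph_0$. (One can say more, e.g.\ exhibit a copy of $\bigoplus_{\lambda \in \Lambda} \mathbb{Z}$, but mere uncountability is all that is needed.)

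Next I would check that $W$ is locally solvable. Using the definition in Section 1.3 of \cite{hall}, each element of $W$ has finite ``support'' in $\Lambda$, so any finite subset $F \subseteq W$ is supported on a common finite chain $\lambda_1 < \cdots < \lambda_k$ in $\Lambda$; hence $\langle F \rangle$ embeds into the finite iterated wreath product $\mathbb{Z} \wr \cdots \wr \mathbb{Z}$ ($k$ factors), which is solvable of derived length $k$. Therefore every finitely generated subgroup of $W$ is solvable, i.e.\ $W$ is locally solvable. This is precisely the property of Hall's construction already recorded in the Introduction, so alternatively one may simply cite that remark.

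Finally, suppose there were an embedding $\iota$ of $W$ into PLo(I). Then $\iota(W)$ would be an uncountable locally solvable subgroup of PLo(I), contradicting Theorem \ref{main}; equivalently, by Theorem \ref{loc solv} the group $\iota(W)$ has no transition chains, so by Theorem \ref{countable} it is countable -- again a contradiction. The one genuinely delicate point is pinning down exactly which ``ordered wreath product'' Hall's Section 1.3 describes and confirming the finite-support property that underlies the reduction to finite iterated wreath products; granting that, local solvability is routine and the rest is immediate from Theorem \ref{main}.
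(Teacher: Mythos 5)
Your proposal is correct and follows essentially the same route as the paper: show Hall's ordered wreath product is locally solvable by reducing finitely generated subgroups to finite iterated wreath products of $\mathbb{Z}$, note it is uncountable when the ordered set is uncountable, and conclude from Theorem \ref{main}. The only difference is that you spell out the uncountability of the group explicitly, which the paper leaves to the remark in its introduction.
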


\begin{proof}
Every finitely generated subgroup of an ordered wreath product of copies of $\mathbb{Z}$ is a subgroup of a finitely iterated wreath product of copies of $\mathbb{Z}$ with itself.  Therefore, every finitely generated subgroup of an ordered wreath product of copies of $\mathbb{Z}$ is solvable.  Hence an ordered wreath product of copies of $\mathbb{Z}$ is locally solvable.  By the main theorem, any such group that embeds in PLo(I) is countable.
\end{proof}

\begin{cor}
Suppose a direct sequence of locally solvable groups has uncountable direct limit.  Then the sequence does not embed as a chain in the subgroup lattice of PLo(I).  
\end{cor}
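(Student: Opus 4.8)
The statement concerns a direct sequence (i.e.\ a direct system indexed by $\mathbb{N}$, or more generally a directed poset) of locally solvable groups $G_1 \to G_2 \to \cdots$ whose direct limit $G_\infty = \varinjlim G_i$ is uncountable; the claim is that this sequence cannot be realized as an ascending chain of subgroups of $\mathrm{PLo}(I)$. The natural approach is by contradiction: suppose the sequence embeds as a chain $H_1 \leq H_2 \leq \cdots$ inside $\mathrm{PLo}(I)$, with each $H_i$ isomorphic to $G_i$ and the inclusions realizing the bonding maps. Then $H_\infty \colonequals \bigcup_i H_i$ is a subgroup of $\mathrm{PLo}(I)$ isomorphic to $G_\infty$, hence uncountable. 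I would then argue that $H_\infty$ is locally solvable, and invoke Theorem~\ref{main} to conclude $H_\infty$ is countable, a contradiction.

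The one real step is showing $H_\infty$ is locally solvable. This is where the chain (directedness) hypothesis is used: given any finite subset $\{x_1,\dots,x_k\} \subseteq H_\infty$, each $x_j$ lies in some $H_{i_j}$, and because the $H_i$ form a chain (equivalently: the index set is directed and the maps are inclusions), all the $x_j$ lie in $H_N$ where $N = \max_j i_j$. Hence $\langle x_1,\dots,x_k\rangle \leq H_N \cong G_N$, which is solvable since $G_N$ is locally solvable (any finitely generated subgroup of a locally solvable group is solvable, and a finitely generated subgroup of $G_N$ is certainly such). Therefore every finitely generated subgroup of $H_\infty$ is solvable, i.e.\ $H_\infty$ is locally solvable. (Here one should note that an isomorphic copy of a solvable group is solvable, so solvability transfers along the embeddings.)

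With local solvability of $H_\infty$ in hand, Theorem~\ref{main} gives $|H_\infty| \leq \aleph_0$. But $H_\infty \cong G_\infty$ is uncountable by hypothesis, a contradiction. Hence no such chain exists in the subgroup lattice of $\mathrm{PLo}(I)$.

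I do not expect any serious obstacle here; the content is essentially a bookkeeping argument reducing to the main theorem. The only point requiring a little care is the precise meaning of ``embeds as a chain in the subgroup lattice'': one wants that the chain of subgroups, ordered by inclusion and with the induced colimit, is isomorphic (as a direct system, or at least at the level of the colimit) to the given direct sequence, so that the colimit of the chain is an uncountable subgroup. Once that is pinned down, the directedness of the index set is exactly what makes every finite subset of the union land in a single member of the chain, which is the crux of the local solvability argument. A secondary minor point is handling a general directed index set rather than $\mathbb{N}$; the argument above already works verbatim in that generality, since ``chain'' forces linear order of the images but the finite-subset-lands-in-one-member property only needs directedness.
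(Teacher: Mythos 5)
Your proposal is correct and matches the paper's argument: the paper likewise observes that the union of a chain of locally solvable subgroups of PLo(I) is locally solvable (which is exactly the finite-subset-lands-in-one-member argument you spell out) and then applies Theorem \ref{main} to get countability, contradicting the uncountability of the direct limit. You simply expand the same two-line argument with the bookkeeping details made explicit.
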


\begin{proof}
The union of a chain of locally solvable subgroups is locally solvable.  By the main theorem, such a union is countable if it is in PLo(I).
\end{proof}


\begin{thebibliography}{1}
\singlespacing
\bibitem{geo} Bleak, Collin; {\it A geometric classification of some solvable groups of homeomorphisms}, J. Lond. Math. Soc. {\bf (2) 78} (2008), no. 2, 352--372.
\bibitem{alg} Bleak, Collin; {\it An algebraic classification of some solvable groups of homeomorphisms}, J. Algebra {\bf 319} (2008), no. 4, 1368--1397. 
\bibitem{min}Bleak, Collin; {\it A minimal non-solvable group of homeomorphisms}, Groups Geom. Dyn. {\bf 3} (2009), no. 1, 1--€"37. 
\bibitem{loc solv}Bleak, Collin; Brough, Tara; Hermiller, Susan; {\it Determining Solubility for Finitely Generated Groups of PL Homeomorphisms}, arXiv preprint: \url{https://arxiv.org/abs/1507.06908} (2015).
\bibitem{nf2} Brin, Matthew G.; Squier, Craig; {\it Groups of Piecewise Linear Homeomorphisms of the Real Line},  Invent. Math. {\bf 79} (1985), no. 3, 485--498. 
\bibitem{ea}Brin, Matthew G.; {\it Elementary amenable subgroups of R. Thompson's group F}, Internat. J. Algebra Comput. {\bf 15} (2005), no. 4, 619--€"642.
\bibitem{hall} Hall, P.; {\it Wreath powers and characteristically simple groups}, Proc. Cambridge Philos. Soc. {\bf 5 8} (1962), 170-184.
\end{thebibliography}
\end{document}